\begin{document}

\newtheorem{theorem}{Theorem}[section]
\newtheorem{lemma}[theorem]{Lemma}
\newtheorem{corollary}[theorem]{Corollary}
\newtheorem{conjecture}[theorem]{Conjecture}
\newtheorem{cor}[theorem]{Corollary}
\newtheorem{proposition}[theorem]{Proposition}
\newtheorem{definition}[theorem]{Definition}
\newtheorem{example}[theorem]{Example}
\newtheorem{claim}[theorem]{Claim}
\newtheorem{xca}[theorem]{Exercise}

\theoremstyle{remark}
\newtheorem{remark}[theorem]{Remark}
\bibliographystyle{alpha}

\renewcommand{\P}[1]{\mathbb{P}\left[#1\right]}
\newcommand{\T}{\mathscr{T}}

\title{Warmth and mobility of random graphs}

\author{Sukhada Fadnavis}
\author{Matthew Kahle}
\author{Francisco Martinez-Figueroa}

\date{\today}
\maketitle

\begin{abstract}

A graph homomorphism from the rooted $d$-branching tree $\phi: T^d \to H$ is said to be \emph{cold} if the values of $\phi$ for vertices arbitrarily far away from the root can restrict the value of $\phi$ at the root. \emph{Warmth} is a graph parameter that measures the non-existence of cold maps. We study warmth of random graphs $G(n,p)$, and for every $d \ge 1$, we exhibit a nearly-sharp threshold for the existence of cold maps. As a corollary, for $p=O(n^{-\alpha})$ warmth of $G(n,p)$ is concentrated on at most two values. As another corollary, a conjecture of Lov\'asz relating mobility to chromatic number holds for ``almost all'' graphs.  Finally, our results suggest new conjectures relating graph parameters from statistical physics with graph parameters from equivariant topology.

%

 
%

\end{abstract}

\section{Introduction}

Graph homomorphisms provide a natural combinatorial framework for statistical physics.  In combinatorial statistical physics, one is often interested in an infinite graph $G$ such as a lattice or a tree, representing a ``ground space'', and then a finite target graph $H$, representing physical ``constraints''. Then graph homomorphisms $\phi: G \to H$ correspond to allowable states of the system. See Winkler's ICM notes \cite{Winkler_ICM} for an introduction and overview.

Brightwell and Winkler studied specifically maps from the rooted $d$-branching tree $G=T^d$ to finite and loopless targets (\cite{BW1}). Motivated by statistical physics, they introduced the idea of a ``cold map''.  A cold map is a graph homomorphism $\phi: T^d \to H$ where the value of $\phi$ at vertices arbitrarily far away from the root can still affect what values are possible at the root---a precise definition is given below. The \emph{warmth} of a graph $H$, denoted $w(H)$, is the maximal $d$ such that there do no exist any cold maps $\phi: T^{d-2} \to H$. It turns out that $w(K_n)=n$ and $w(H)=2$ for any bipartite graph. 

Our main interest here is the warmth of the edge-independent random graph $G(n,p)$. For every $d \ge 1$, we find a nearly sharp phase transition for the existence of cold maps $T^d \to G(n,p)$, analogous to a solid-liquid phase transition. Our results are illustrated in Figure \ref{fig:summary}.
As a corollary of our results, warmth is concentrated at most at two values, as long as $p=O \left(n^{-\alpha} \right)$. This is perhaps surprising, as warmth is not a monotone property.

Brightwell and Winkler proved a number of results relating statistical physics with graph coloring. As one corollary of our main results, we verify that a conjecture of Lov\'asz that \emph{mobility} (defined in Section \ref{sec:mobile}) is a lower bound on chromatic number holds ``almost always,'' in the sense that it holds for random graphs with high probability.

Methods from equivariant algebraic topology have also provided lower bounds on chromatic number, for example in work of Lov\'asz \cite{Lovasz}, and Babson and Kozlov \cite{BK1,BK2}. Our results here, combined with some earlier work \cite{Kahle_neighborhood}, suggest a strong correlation between some of these parameters for random graphs, thus we make deterministic conjectures to relate the statistical physics and topological sides. Namely, we make Conjecture \ref{conj:conn} that
$$ w(H) \le \mbox{conn}[\mathcal{N}(H)] + 3,$$
for every finite graph $H$. Partly motivated by an earlier version of this paper, Dochtermann and Freij-Hollanti \cite{DFH18} proved this conjecture in some special cases.

\begin{figure}
\centering
\scalefont{2}
\scalebox{0.75}{
\begin{tikzpicture}[thick]
\large

\draw [fill=blue] (1, 1.5) rectangle (7,  2); 
\draw [fill=blue] (1, 2) rectangle (10, 2.5);
\draw [fill=blue] (1, 2.5) rectangle (11, 3);
\draw [fill=blue] (1, 3 ) rectangle (11.5, 3.5);
\draw [fill=blue] (1, 3.5) rectangle (11.8, 4);
\draw [fill=blue] (1, 4) rectangle (12, 4.5);
\draw [fill=blue] (1, 4.5) rectangle (12.1428, 5);
\draw [fill=blue] (1, 5) rectangle (12.25, 5.5);
\draw [fill=blue] (1, 5.5) rectangle (12.33, 6);
\draw [fill=blue] (1, 6) rectangle (12.4, 6.5);

\draw (1,1.4) -- (1,1.6);
\draw    (7,1.4) -- (7,1.6);
\draw (10,1.4) -- (10,1.6); 
\draw (11,1.4) -- (11,1.6);
\draw (11.5,1.4) -- (11.5,1.6);
\draw (11.8,1.4) -- (11.8,1.6);
\draw (11.8,1.4) -- (11.8,1.6);
\draw (12,1.4) -- (12,1.6);
\draw(12.1429,1.4) -- (12.1429,1.6);
\draw (12.25,1.4) -- (12.25,1.6);
\draw (12.33,1.4) -- (12.33,1.6);
\draw (12.4,1.4) -- (12.4,1.6);
\draw (12.4545,1.4) -- (12.4545,1.6);

\node at (0.85,1) {\bf $-2$};
\node at (6.85,1) {\bf $-1$};
\node at (9.85,1) {\bf $- \frac{1}{2}$};
\node at (10.85,1) {\bf $- \frac{1}{3}$};
\node at (13,1) {\bf $0$};

\node at (0.5,1.75) {$1$};
\node at (0.5,2.25) {$2$};
\node at (0.5,2.75) {$3$};
\node at (0.5,3.25) {$4$};
\node at (0.5,3.75) {$5$};
\node at (0.5,4.25) {$6$};
\node at (0.5,4.75) {$7$};
\node at (0.5,5.25) {$8$};
\node at (0.5,5.75) {$9$};
\node at (0.5,6.25) {$10$};
\node at (13.75,1.5) {\bf \huge $\alpha$};
\node at (1,7.25) {\bf \huge $d$};
\draw (1,1.5) -- (13,1.5);
\draw (1,1.5) -- (1,6.5);
\draw (13,1.4) -- (13,6.5);
\end{tikzpicture}
}
\caption{A summary of our results. The horizontal axis is $\alpha$, where $p = n^{\alpha}$ is the edge probability. The vertical axis $d$, represents $d$-branching rooted tree $T^d$. In the blue region, cold maps $T^d \to G(n,p)$ exist with high probability; in the white region, they do not.}
\label{fig:summary}
\end{figure}

\section{Background}



All the graphs we consider here are simple graphs, meaning undirected and without multiple edges or loops. For a graph $G$ with vertices $u, v \in V(G)$, we write $\{u,v\}\in E(G)$ to denote adjacency. Other than infinite branching trees $T^d$, we will consider only finite graphs. 

\begin{definition} For graphs $G$ and $H$, a function $\phi: V(G) \rightarrow V(H)$ is a \emph{graph homomorphism} if it respects adjacency. That is, whenever $\{u,v\}\in E(G)$, we have $\{\phi(u),\phi(v)\}\in E(H)$.
\end{definition}

Graph homomorphisms generalize graph colorings, since an $n$-coloring of $H$ is equivalent to a homomorphism $H \to K_n$. For an overview of graph homomorphisms, see the book \cite{Hell}.

The set of all homomorphisms of a graph $G$ to a graph $H$ is denoted Hom($G,H$). It
is naturally endowed with a graph structure as follows: the elements of Hom($G,H$) are the vertices of a graph, with an edge between
two homomorphisms whenever they differ in exactly one vertex. There is also a natural higher-dimensional structure on Hom($G$,$H$) which is important in
topological combinatorics \cite{BK1}. \\

Let $T^d$ denote the (infinite) $d$-branching tree, as illustrated in Figure \ref{fig:tree}. Note that every vertex has degree $d+1$ except the root, which has degree $d$.


\tikzstyle{every node}=[circle, draw, fill=blue,
                        inner sep=0pt, minimum width=4pt]
\begin{figure}
\begin{tikzpicture}[scale=0.8,line width=1pt]
\foreach \x in {0, ...,3}
{
\draw (0,0)--(\x*120:1);
}
\foreach \x in {0, ..., 2}
{
\draw (\x*120:1)--(\x*120-40:2);
\draw (\x*120:1)--(\x*120:2);
\draw (\x*120:1)--(\x*120+40:2);
}
\foreach \x in {0, ..., 8}
{
\draw (\x*40:2)--(\x*40-13.3333:3);
\draw (\x*40:2)--(\x*40:3);
\draw (\x*40:2)--(\x*40+13.3333:3);
}
\foreach \x in {0,...,26}
{
\foreach \y in {-1,...,1}
{
\draw (\x*13.3333:3)--(\x*13.3333+\y*4.44444:4);
}
}

\foreach \y in {0,...,80}
{
\draw (\y*4.4444:4) node{};
}
\foreach \y in {0,...,26}
{
\draw (\y*13.3333:3) node{};
}
\foreach \y in {0,...,8}
{
\draw (\y*40:2) node{};
}
\foreach \y in {0,...,2}
{
\draw (\y*120:1) node{};
}
\draw (0,0) node{};

\end{tikzpicture}
\caption{The (truncated) $3$-branching tree $T^3$.}
\label{fig:tree}
\end{figure}

\begin{definition}\label{def:cold_and_warm} A map $\varphi$ in Hom($T^d,H$) is said to be \emph{cold} if there is a node $a$ of $H$ such that for any $k\geq 0$, whenever a map
$\psi \in $ Hom($T^d,H$) agrees with $\varphi$ on the sites at distance $k$ from the root $r$, then $\psi(r)\neq a$. A graph $H$ is said to be
\emph{$d$-warm} if Hom($T^{d-2}, H$) does not contain any cold maps. Furthermore, the \emph{warmth} $w(H)$ of $H$ is defined to be the largest $d$
for which $H$ is $d$-warm.
\end{definition}

For a graph $H$ and a vertex $u \in H$ let $N(u)$ denote the set of vertices in $H$ adjacent to $u$. $N(u)$ is called the \emph{neighborhood}
of $u$. For a set of vertices $A$, define the neighborhood $N(A) = \cup_{a \in A} N(a)$.

We state a theorem of Brightwell and Winkler, which gives an equivalent characterization of warmth.

\begin{definition}   A family of subsets $\{A_i\}_{1}^{w}$ of $H$ is called a \emph{d-stable} family if for all $1 \leq i \leq w$ there
exist $A_{i_{1}} \cdots A_{i_{d}}$ such that $$\bigcap_{j=1}^{d} N(A_{i_j}) = A_i.$$
\end{definition}

\begin{theorem}[Brightwell and Winkler \cite{BW1}]\label{thm:stable_fam} \label{thm1}  Given a constraint graph $H$ and $d \geq 1$, the following are equivalent:

\begin{itemize}
\item $H$ is not $(d+2)$-warm; i.e.\ there exists a cold map $\phi: T^d \to H$.
\item There exists a $d$-stable family of subsets of $H$.\\
\end{itemize}
\end{theorem}

\section{Statement of Results}

In the following sections we study the warmth of Erd\H{o}s-R\'enyi random graphs and show that warmth is much smaller than the chromatic number.  By the Brightwell-Winkler inequalities, the mobility is also relatively
small, and comparing with known results about chromatic numbers of random graphs gives Conjecture \ref{LC}  for ``almost all'' graphs.

The following is a summary of results.  We use Bachmann--Landau and related notations: $O, o, \Omega, \omega, \Theta$.  In every case, the
asymptotic notation is to be understood as the number of vertices $n \to \infty$. We say that an event happens \emph{asymptotically almost surely (a.a.s)} if its probability approaches $1$ as $n \to \infty$.

The statement of the theorems and proofs are similar for the sparse and dense cases -- however it eases notation and simplifies the proofs to treat the sparse and dense cases separately. 

\begin{theorem}[Sparse regime]\label{sparse} For a constant $\alpha>0$:\leavevmode 

If $p=O( n^{-\alpha})$, then a.a.s. $$w(G(n,p)) \le \lfloor 1/ \alpha +2 \rfloor.$$

If $p = \Omega ( n^{-\alpha})$, then a.a.s. $$w(G(n,p)) \ge \lceil 1/ \alpha + 1 \rceil .$$
\end{theorem}
As a corollary we have that in the regime $p=O \left(n^{-\alpha} \right)$, where $\alpha >0$ is fixed, the warmth is concentrated on at most two values and that for ``most''
sequences $p=p(n)$ is concentrated on one value.  For instance if $p = \Theta (n^{-\alpha})$ with $1/(k+1) < \alpha < 1/k$ then a.a.s.\ we have
$w = k+2$.  See Figure \ref{fig:summary}.

We also have the following bounds on warmth in the dense regime.

\begin{theorem}[Dense regime]\label{dense}
If  $p=1/2$, then a.a.s%
$$\log_2{n}-2\log_2{\log_2{n}} \leq w(G(n,p)) \le \log_2 {n} +2.$$
\end{theorem}



In Section \ref{sec:mobile}, we apply our main results to show that a conjecture of Lov\'asz holds for almost all graphs. In Section \ref{sec:topology}, we dicuss possible connections between statistical-physics and topological parameters that each give lower bounds on chromatic number.





\section{Upper bounds}
The main idea of the proof is to construct $s$-stable families and invoke Theorem \ref{thm:stable_fam} to show warmth is at most $s+1$. In both the sparse and dense cases, the $s$-stable families can be obtained with high probability by the collection of singletons of vertices. This reduces to showing that the probability all $s$-sets of neighbors of a vertex $v$ share another common neighbor, goes to 0 as $n\to\infty$. We achieve this by considering disjoint $s$-subsets of $N(v)$ and using the independence of events to bound this probability.

\begin{proof} [Proof of Upper Bounds in Theorems \ref{sparse} and \ref{dense}.] Let $\delta(H)$ denote the minimum degree of $H$. It will be convenient to assume $\delta(G(n,p))$ is big, so we treat the case $p=O(n^{-0.99})$ separately. If $G(n,p)$ is disconnected, ${w(G(n,p))\leq 2}$, so we may assume $p$ is over the connectivity threshold. Let $u,v,w$ be a triangle of $G(n,p)$ (which exists a.a.s.). The probability that $\{\{u\},\{v\},\{w\}\}$ is not a 2-stable family is the same as the probability that there is a vertex $z$ connected to two of these vertices at the same time, which is bounded above by $3np^2=o(1)$. Then, a.a.s. it is a 2-stable family, and by Theorem \ref{thm:stable_fam}, $w(G(n,p))\leq 3$. 

We now assume $$p\geq\frac{\log n+\omega(\log\log n)}{n},$$ which ensures that for any fixed $k$, $\delta(G(n,p))\geq k$ a.a.s. \cite{Bollobas}. In the sparse case, let $p\leq C n^{-\alpha}$ for some constants $C>0, 0<\alpha\leq 0.99$; in this case set $s= \lfloor 1/\alpha + 1\rfloor$.  In the dense case, we assume that $p=1/2$, and  set $s = \lfloor\log_{2}{n}+1\rfloor $.  In both cases we will show that $$w(G(n,p)) \leq s+1.$$

Let $c=\frac{2s}{s\alpha-1}$ in the sparse case and $c(n) = n/4$ in the dense case. Observe $\P{\delta(G(n,p)) \leq c} \rightarrow 0$ in both cases \cite{Bollobas}, so we shall assume that $\delta(G) > c$, for $G=G(n,p)$.

Consider the family $\mathcal{F} = \{\{v\} \mid v \in V \}$ of singleton vertices of $G$. We show that a.a.s.\ $\mathcal{F}$ is an $s$-stable family, i.e.\ for every vertex $v$ there exist $\{ u_1, \cdots , u_s \} \subseteq N(v)$ such that $$\bigcap_{i=1}^s N(u_i)= \{v\}.$$ We call such a set $\{u_1, \ldots, u_s\}$ an \emph{$s$-representative} of $v$ in $\mathcal{F}$.

For $u_1, \cdots, u_s$ neighbors of $v$, the probability that they are also adjacent to vertex $u$  is $p^s$ by edge independence. Let $A(w)(w_1, \cdots, w_s)$ denote the event that $w_1, \cdots, w_s$ are in the neighborhood $N(w)$ of $w$. Then,
$$\P{A(u)(u_1, \cdots, u_s) \text{ for some } u \in V} \leq \sum_{u \in V} \P{A(u)(u_1, \cdots, u_s)} = np^s,$$
where $\P{A(u)(u_1, \cdots, u_s) \text{ for some } u \in V}$ is the probability that $v$ shares $u_1, \cdots, u_s$ as neighbors with some other vertex in $V$.
Let $\gamma = \lfloor c/s \rfloor$. We consider $N_1, \cdots, N_{\gamma}$  some disjoint subsets of the neighbors of $v$, such that $|N_i| = s$. We can do so by the assumption that $|N(v)| \geq c$. Let $$N(v,s) = \{U\subset N(v), |U| = s\}.$$ For $M \in N(v,s)$ let $A(M)$ denote the event that $M \subseteq N(u)$ for some $u \in V$.
We have
\begin{equation*}
    \P{\bigcap_{M \in N(v,s)} A(M)} \leq \P{\bigcap_{i = 1}^{\gamma} A(N_i)}.
\end{equation*}
That is, the probability that $v$ shares every $s$-subset of $N(v)$ with some other vertex of $V$ is less than the probability that $v$ shares each of the $N_i$ as neighbors with some other vertex in $V$.

Since the $N_i$ are disjoint, we have, $$\P{\bigcap_{i = 1}^{\gamma} A(N_i)} = (np^s)^{\gamma} .$$
Thus, the probability that $G(n,p)$ does not have a $s$-stable family is less than the probability that some vertex does not have an $s$-representative in $\mathcal{F}$, which is
$$O\left(\sum_{v \in V} (np^s)^{\gamma}\right) = O(n(np^s)^{\gamma}).$$
In the sparse case $\gamma(1-s\alpha)\leq\frac{c}{s}(1-s\alpha)\leq -2$. Then, the probability that $G$ does not have an $s$-stable family is 
$$O\left(n(np^s)^\gamma\right)=O\left(n^{1+\gamma(1-s\alpha)}\right)= O\left(n^{-1}\right)=o(1).$$

In the dense case, $$n(np^s)^\gamma=n^{1+\gamma}p^{s\gamma}=2^{(1+\gamma)\log_2{n}-s\gamma}=2^{-[\gamma(s-\log_2{n})-\log_2{n}]}$$

Here $\log_2{n}+1/2\leq s\leq 2\log_2{n}$, so for $n$ large, there exist $\epsilon>0$ such that
\begin{align*}
    \gamma(s-\log_2n)-\log_2n &\geq \frac{n}{5s}(s-\log_2n)-\log_2n \geq \frac{n}{10s}-\log_2n\\
    &\geq \frac{n}{20\log_2n}-\log_2n=\frac{n-20\left(\log_2 n\right)^2}{20\log_2n}\geq n^{\epsilon}.
\end{align*}

Thus the probability that $G$ does not have an $s$-stable family is 
$$O(2^{-[\gamma(s-\log_2{n})-\log_2{n}]})=O(2^{-n^\epsilon})=o(1). $$

This proves that in both cases a.a.s.\ $G(n,p)$ has an $s$-stable family of subsets and hence by Theorem \ref{thm1}, a.a.s.\ $w(G(n,p))
\leq s+1$. 
\end{proof}

\section{Lower bounds} \label{sec:lower}

We now discuss maps from the infinite $s$-branching tree $T^s$ and it is convenient to label its vertices with the root labeled $1$ and its children labeled $2, 3, 4 \dots$, as shown in Figure \ref{fig:three_branching_map}. We will also consider $T^s_\ell$, the finite $s$-branching tree of length $\ell$,  to be the induced subgraph of $T^s$ by all vertices within distance $\ell$ of the root. 


\begin{figure}
\begin{tikzpicture}[line width=1pt]
\foreach \x in {0, ...,3}
{
\draw (0,0)--(\x*120:1);
}
\foreach \x in {0, ..., 2}
{
\draw (\x*120:1)--(\x*120-40:2);
\draw (\x*120:1)--(\x*120:2);
\draw (\x*120:1)--(\x*120+40:2);
}
\foreach \x in {0, ..., 8}
{
\draw (\x*40:2)--(\x*40-13.3333:3);
\draw (\x*40:2)--(\x*40:3);
\draw (\x*40:2)--(\x*40+13.3333:3);
}

\foreach \y in {0,...,26}
{
\draw (\y*13.3333:3) node{};
}

\foreach \y in {0,...,8}
{
\draw (\y*40:2) node{};
}

\foreach \y in {0,...,2}
{
\draw (\y*120:1) node{};
}

\draw (0,0) node{};
\tikzstyle{every node}=[]
\draw (0.2,0.2) node{$1$};

\foreach \x in {2, ..., 4}
{
\draw (120*\x-135:0.9) node{$\x$};
}

\foreach \x in {5,...,13}
{
\draw (\x*40-208:2) node{$\x$};
}

\foreach \x in {14,...,40}
{
\draw ((\x*13.3333-240:3.3) node{$\x$};
}
\end{tikzpicture}
\caption{The labeling of vertices for $T^3_3$.}
\label{fig:three_branching_map}
\end{figure}

Given $s$ and $\ell$, set $m=1+s+\dots+s^{\ell-1}=\frac{s^\ell-1}{s-1}$.  Then $T^s_\ell$ has $$1+s+s^2+\dots+s^\ell=1+s(1+\dots+s^{\ell-1})=1+sm$$ vertices, and $s^\ell$ leaves $L=\{m+1,m+2,\dots, m+s^\ell\}$. 

Let  $D = D (T^{s}_\ell)$ denote the set of leaves of $T^{s}_\ell$, together with the root; i.e. $$D = \{ 1 \} \cup L. $$ Let $H$ be any graph, and
$f :   D \to V(H)$ any function.  We say that $f$ is {\it extendable} if there exists a graph homomorphism $\phi : T^{s}_\ell \to H$ such that
$\phi |_{D} = f$, and that $H$ has {\it property $\mathcal{P}^s_{\ell}$} if every function $f: D( T^{s}_\ell) \to V(H)$ is extendable.

Property $\mathcal{P}^s_{\ell}$ precludes any cold maps $T^s \to H$, so by definition such graphs have warmth at least $s+2$. A useful observation is that property $\mathcal{P}^s_{\ell}$ is monotone, even though warmth is not. Thus if property $\mathcal{P}^s_{\ell}$ holds for $G(n,p)$ a.a.s.\
then it also holds for $G(n,p')$ a.a.s.\ whenever $p \leq p'$.

In the following couple of lemmas, we start by bounding the probability that $G(n,p)$ has property $\mathcal{P}^s_{\ell}$. To do so we apply Janson's inequality (see \cite{Alon}). 

\begin{lemma}\label{lemma:extendable_map}
Let $s,\ell\geq 2$ be constant integers and $p=n^{-\alpha}$. Suppose $$1/m< 1-s\alpha <1/(m-1),$$ where $m$ is as defined above. 
Given fixed vertices $w_1,w_{m+1}, w_{m+2},\dots, w_{m+s^\ell}$ in $[n]$, define $f:D\to G(n,p)$ by $f(i)=w_i$ for $i\in D(T^s_\ell)$. Then there exist positive constants $C$ and $\epsilon$ such that 
$$\P{f \textup{ is not extendable in } G(n,p)}\leq e^{-Cn^\epsilon}.$$
\end{lemma}

\begin{proof}
Let $W=\{w_1,w_{m+1},\dots, w_{m+s^\ell}\}$ and $\epsilon=\frac{1-s\alpha}{s}$.
Let $\mathcal{A}$ be the collection of ordered $(m-1)$-subsets of $[n]\setminus W$, thus $$(m-1)!\binom{n-|W|}{m-1}\leq |\mathcal{A}|,$$ since $\ell$, $s$ and $m$ are constants, $|A|=\Omega(n^{m-1})$.

Given $A_i=\{w^i_2,\dots,w^i_m\}\in\mathcal{A}$ (in that order), define $\phi_i:T^s_\ell\to [n]$ by

\begin{equation}\label{map_phi}
\phi_{i}(j) = \left\{
\begin{array}{rl}
w_1 & \text{if }  j=1\\
w_j^i & \text{if } 2\leq j \leq m \\
w_j & \text{if } m+1 \leq j \leq m+s^\ell
\end{array} \right.
\end{equation}

Let $A'_i=\phi_i(T^s_\ell)$, and let $B_i$ be the event that $A_i'$ is a subgraph of $G(n,p)$. Note that the event $B_i$ is equivalent to $\phi_i$ being a graph homomorphism $\phi_i:T^s_\ell\to G(n,p)$. 

Also note $|E(A_i')|=|E(T^s_\ell)|=ms$, since $\phi_i$ is edge-injective. Thus $\P{B_i}=p^{ms}$. We will bound $$\P{f \text{ is not extendable}}\leq\P{\bigwedge_{i=1}^{|\mathcal{A}|}\overline{B_i}}.$$

Since in general the events $B_i$ are not independent, we use Janson's inequality to bound this probability. See \cite{Alon} for more on this inequality. 

Write $i\sim j$ whenever $i\neq j$ and $E(A'_i)\cap E(A'_j)\neq\emptyset$. Note that $B_i$ and $B_j$ are not independent if and only if $i\sim j$. 

Since $\ell\geq 2$, whenever $A'_i$ has an edge in common with $A'_j$, it must be that $A_i$ and $A_j$ share at least one vertex. If $k=|A_i\cap A_j|$, then $1\leq k\leq m-1$, and%
$$|E(A'_i\cup A'_j)|=2ms-|E(A'_i\cap A'_j)|\geq 2ms-ks,$$%
since $A_i'$ and $A_j'$ can share at most $ks$ edges, when all the $k$ vertices they share are parents of leaves. Thus
\begin{align*}
    \P{B_i\wedge B_j}=&\P{A'_i\leq G(n,p) \text{ and } A'_j\leq G(n,p)}\\
    =&\P{(A'_i\cup A'_j)\leq G(n,p)}=p^{|E(A'_i\cup A'_j)|}\leq p^{2ms-ks}.
\end{align*}

Let $\mathcal{A}_k\subset\mathcal{A}\times\mathcal{A}$ be the set of pairs $(A_i,A_j)$ such that $i\sim j$ and $|A_i\cap A_j|=k$. Note there is a constant $C$ depending only on $s$ and $\ell$, such that $$|\mathcal{A}_k|\leq C\binom{n-|W|}{2m-2-k}\leq Cn^{2m-2-k},$$
since $|A_i\cup A_j|=2m-2-k$. 

We compute now $\mu$ and $\Delta$ as in \cite{Alon} to apply Janson's inequality. 
\begin{align*}
    \mu &=\sum_{A_i\in\mathcal{A}}\P{B_i}=|\mathcal{A}|\P{B_1}=\Omega\left(n^{m-1}\right)p^{sm}=\Omega\left( (np^s)^mn^{-1}\right).\\
    \Delta &=\sum_{i\sim j}\P{B_i\wedge B_j}\leq \sum_{k=1}^{m-1}|\mathcal{A}_k|p^{2ms-ks}\leq \sum_{k=1}^{m-1} Cn^{2m-2-k}p^{2ms-ks}\\
    &= C n^{2m-2}p^{2ms}\sum_{k=1}^{m-1} (np^s)^{-k}= C n^{2m-3}p^{2ms-s}\sum_{k=0}^{m-2}\left(\frac{1}{np^s}\right)^k\\
    &= O\left( (np^s)^{2m-1}n^{-2}\right).
\end{align*}

Where the last inequality follows since for $n$ large enough, $np^s=n^{1-s\alpha}>2$ so the sum is bounded above by 2.

Hence we have $$\frac{\Delta}{\mu}\leq O\left(\frac{(np^s)^{2m-1}n^{-2}}{(np^s)^mn^{-1}}\right)= O\left((np^s)^{m-1}n^{-1}\right)=O\left(n^{(1-s\alpha)(m-1)-1}\right)=o(1),$$
since $1-s\alpha<1/(m-1)$. 

Finally, let $\epsilon=(1-s\alpha)m-1$, so $\epsilon>0$ by hypothesis and $\mu> C n^\epsilon$, for some constant $C>0$. Applying Janson's Inequality we get%
$$\P{\bigwedge_i \overline{B_i}}\leq e^{-\mu+\Delta/2}=e^{-\mu(1-o(1))}\leq e^{-\mu/2} \leq e^{-Cn^\epsilon}.$$
\end{proof}

\begin{corollary}\label{lemma:property_s_ell_dense}
For $s,\ell$ and $p$ as in the previous lemma, a.a.s. $G(n,p)$ has property $\mathcal{P}^s_{\ell}$.
\end{corollary}

\begin{proof}
There are less than $n^{1+s^\ell}$ possible sets $W$, thus
\begin{align*}
    \P{\text{$f$ corresponding to some $W$ is not extendable}}&\leq n^{1+s^\ell}e^{-Cn^\epsilon}\\
    =\exp{\left((1+s^\ell)\log{n} -Cn^\epsilon\right)}&=o(1).
\end{align*}
\end{proof}

We now complete the proof. 

\begin{proof}[Proof of Lower Bound in Theorem \ref{sparse}]
Trivially $w(G(n,p))\geq 2$ if it has any edges, so we focus on the non-trivial cases and assume $\alpha<1$. We prove that for $p=\Omega(n^{-\alpha})$ and $s=\lceil1/\alpha -1\rceil$, there exists an $\ell>2$ such that a.a.s. $G(n,p)$ has property $\mathcal{P}^s_{\ell}$, so $w(G(n,p))\geq s+2$. 

Note that $1-s\alpha>0$, so there must exist some $\ell>2$ such that $$\frac{1}{m}=\frac{s-1}{s^\ell-1}<1-s\alpha.$$
Thus, there exists $\beta\geq\alpha$ such that $$1/m<1-s\beta<1/(m-1).$$
Applying lemma \ref{lemma:extendable_map} and its corollary, a.a.s. $G(n,p^{-\beta})$ has property $\mathcal{P}^s_{\ell}$. Since $n^{-{\beta}}\leq n^{-\alpha}$, by monotonicity, a.a.s. $G(n,p)$ also has property $\mathcal{P}^s_{\ell}$, as desired. 
\end{proof}

We say that a graph $G$ is \textit{$s-$neighborly} if every $s$-subset of its vertices has a common neighbor. Note that $G$ is $(s+1)$-neighborly if and only if $G$ has property $\mathcal{P}^s_{2}$, and in such case $w(G)\geq s+2$. With this in mind, we now prove the lower bound in Theorem \ref{dense}.

\begin{proof}[Proof of Lower Bound in Theorem \ref{dense}]
Given a $s$-subset $S\subset [n]$, the probability that it has no common neighbors is 
$$\P{\bigcap_{v\in S}N(v)=\emptyset}=(1-p^s)^{n-s}.$$
Thus $\P{G(n,p) \text{ is not $s$-neighborly}}\leq\binom{n}{s}(1-p^s)^{n-s}\leq n^se^{-p^s(n-s)}$. 

Let $p=1/2$ and $s=\lfloor\log_2{n}-2\log_2\log_2{n}\rfloor\leq \log_2n-2\log_2\log_2n\leq\log_2n$. So 

\begin{align*}
n^s &\leq n^{\log_2n}=e^{\ln{2}(\log_2n)^2},\\
n-s &\geq n-\log_2n \text{, and}\\ 
2^s &\leq 2^{(\log_2n-2\log_2\log_2n)}
\end{align*}
Hence\begin{align*}
    -p^s(n-s)&=-2^{-s}(n-s)\leq -2^{(2\log_2\log_2n-\log_2n)}(n-\log_2n)\\
    &\leq -2^{(2\log_2\log_2n)}\left(1-\frac{\log_2n}{n}\right)=-(\log_2n)^2\left(1-\frac{\log_2n}{n}\right).
\end{align*}
So
\begin{align*}
    \P{G(n,1/2) \text{ is not $s$-neighborly}}&\leq n^se^{-p^s(n-s)}\leq \exp\left(\ln2(\log_2n)^2-(\log_2n)^2\left(1-\frac{\log_2n}{n}\right)\right)\\
    &= \exp\left(-(\log_2n)^2\left[1-\ln2-\frac{\log_2n}{n}\right]\right)\to0\text{   as }n\to 0.
\end{align*}
Thus a.a.s. $G(n,1/2)$ is $s$-neighborly and $$w(G(n,1/2))\geq \log_2n-2\log_2\log_2n+2.\eqno\qedhere$$
\end{proof}

\section{Mobility and Lov\'asz's conjecture} \label{sec:mobile}

Along with the definition of warmth, in \cite{BW1} Brightwell and Winkler also defined the mobility of a graph, which instead of focusing on graph homomorphisms coming from branching trees, describes the connectivity of Hom($H$,$G$) for general ``test graphs'' $H$ of a given maximum degree. While Brightwell and Winkler themselves proved inequalities relating these invariants, Lov\'asz conjectured an extra inequality between mobility and the chromatic number of a graph (Conjecture \ref{LC}). Below, we recall these definitions and results, which combined with our results for the warmth of random graphs, will show that Lov\'asz's conjecture is true a.a.s. for random graphs. 

\begin{definition} Let $\mathcal{H}_d$ denote the set of all finite simple graphs of maximum degree $d$. The graph $G$ is said to be \emph{$d$-mobile} if for each $H\in\mathcal{H}_{d-2}$, Hom($H, G$) is connected or empty.
The largest $d$ for which $G$ is $d$-mobile is the \emph{mobility}, $m(G)$, of $G$.
\end{definition}

Brightwell and Winkler established the following inequalities relating warmth, mobility, and chromatic number.

\begin{theorem}[Brightwell and Winkler \cite{BW1}] \label{ineqs} For every finite unlooped graph $G$, we have

\begin{itemize}
\item $w(G) \leq \chi(G)$,
\item $m(G) \leq 2w(G) -1$, and therefore
\item $m(G) \leq 2\chi(G) -1$.\\
\end{itemize}
\end{theorem}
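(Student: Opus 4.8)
The plan is to prove the three inequalities of Theorem \ref{ineqs} in order of logical dependence, observing first that the third bullet is a formal consequence of the other two: combining $m(H) \le 2w(H) - 2$ with $w(H) \le \chi(H)$ gives $m(H) \le 2\chi(H) - 2$ at once. So the real content lies in the first two statements, and I would organize the argument entirely around them.

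For the bound $w(H) \le \chi(H)$, I would work through the stable-family characterization of Theorem \ref{thm1} rather than the tree definition directly. Setting $c = \chi(H)$, it suffices to produce a $(c-1)$-stable family, since its existence certifies that $H$ is not $(c+1)$-warm, whence $w(H) \le c$. Starting from a proper $c$-coloring with classes $C_1, \dots, C_c$, the natural candidate is the family of color classes themselves. Because each $C_i$ is independent, $N(C_j) \subseteq V(H) \setminus C_j$, so intersecting the $c-1$ neighborhoods $N(C_j)$ over $j \ne i$ lands inside $C_i$; this supplies one of the two inclusions needed for $(c-1)$-stability for free. The main obstacle is the reverse inclusion: a vertex of $C_i$ need not have a neighbor in every other color class, so the intersection can be a proper subset of $C_i$. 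I would repair this either by passing to an extremal coloring or, more robustly, by enlarging the candidate collection to the family generated by the color classes under the operation of taking $(c-1)$-fold intersections of neighborhoods and closing up; finiteness of $H$ guarantees termination, and the point to verify is that the resulting closed family realizes each member as an \emph{exact} $(c-1)$-fold intersection. Turning this closure into equality, rather than mere containment, is where I expect the real work to be.

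For $m(H) \le 2w(H) - 2$, I would argue through the tree definition. Writing $w = w(H)$, the graph $H$ is not $(w+1)$-warm, so there is a cold map $\varphi \in \mathrm{Hom}(T^{w-1}, H)$ and a vertex $a$ of $H$ such that, for every $k$, no homomorphism agreeing with $\varphi$ at distance $k$ from the root sends the root to $a$. The goal is to convert this obstruction into a \emph{finite} bounded-degree test graph $F$ with $\mathrm{Hom}(F, H)$ nonempty but disconnected, which forces $H$ to fail to be $(2w-1)$-mobile and hence $m(H) \le 2w - 2$. I would truncate $T^{w-1}$ at some depth $k$ and glue two copies of the truncated tree along their leaves (attaching rigid gadgets there if needed to pin the boundary values), producing a finite $F$ in which a map with root value $\varphi(r)$ and a competing map with root value $a$ both extend to honest homomorphisms; coldness is exactly the statement that one cannot pass from the first to the second by single-vertex moves, so these two homomorphisms lie in distinct components of $\mathrm{Hom}(F, H)$. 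The factor of two, and the target $F \in \mathcal{H}_{2w-3}$, emerge from the gluing: identifying leaves of two $(w-1)$-branching trees roughly doubles the local degree, and the technical point is to perform the gluing so that the maximum degree is exactly $2w-3$ while simultaneously certifying separation. I expect this degree bookkeeping, together with a clean proof that coldness survives truncation to yield genuine disconnection, to be the crux of the whole theorem.
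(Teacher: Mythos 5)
First, a point of comparison: the paper does not prove Theorem \ref{ineqs} at all --- it is quoted as background from Brightwell and Winkler \cite{BW1} --- so your proposal has to stand entirely on its own. Your observation that the third bullet follows formally from the first two is correct, but in both remaining parts the step you explicitly defer is not a technical loose end; it is the whole content of the theorem, and in each case the specific repair you propose can be seen to fail on small examples.

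For $w(H) \leq \chi(H)$: let $H$ be the ``paw,'' a triangle $x_1x_2x_3$ with a pendant vertex $y$ attached to $x_1$. Then $\chi(H)=3$, and in every proper $3$-coloring $y$ shares its class with $x_2$ or $x_3$, say $C=\{x_2,y\}$. Since $x_1$ is the unique neighbor of $y$, any set $X$ with $y \in N(X)$ must contain $x_1$, hence $N(X) \supseteq N(x_1)=\{x_2,x_3,y\}$; consequently \emph{every} intersection of neighborhoods that contains $y$ also contains $x_3$. So the class $C$ can never be realized as an exact intersection of neighborhoods, no matter what sets you adjoin to the family: closing up under $(c-1)$-fold intersections cannot repair it, and passing to a different (``extremal'') coloring does not help since the same obstruction appears in every $3$-coloring. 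The only way out is to discard $y$'s class (and $y$ itself) and build a stable family supported on a subgraph --- here the singletons $\{x_1\},\{x_2\},\{x_3\}$ form a $2$-stable family --- so ``the family generated by the color classes'' is the wrong object, and proving that some nonempty stable family survives such deletions for an arbitrary graph is precisely Theorem \ref{ineqs}, not a verification step. (Note also that any argument via Theorem \ref{thm1} needs Brightwell and Winkler's precise formulation with nonempty sets; as literally stated in the paper, degenerate families like $\{\emptyset\}$ vacuously satisfy the definition.)

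For $m(H) \leq 2w(H)-2$: two separate problems. The degree arithmetic is off --- identifying the degree-one leaves of two truncated copies of $T^{w-1}$ yields a graph of maximum degree $w$ (attained at internal vertices), not $2w-3$, so if your construction worked it would prove $m(H) \leq w(H)+1$, strictly stronger than the inequality in question once $w \geq 4$; that overshoot is a warning sign. More fundamentally, coldness is \emph{not} ``exactly the statement that one cannot pass from the first map to the second by single-vertex moves.'' Coldness says that no homomorphism of the tree agreeing with $\varphi$ at distance $k$ from the root has root value $a$; on your glued graph $F$ this only means that within the fiber of homomorphisms whose common leaf values equal those of $\varphi$, the root never takes value $a$ (indeed, such a competing homomorphism does not exist at all, so you have not produced two maps to separate). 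A path in $\mathrm{Hom}(F,H)$ is free to leave that fiber: single-vertex moves may alter leaf values, then move the root, then restore the leaves. Ruling this out --- by converting a path of single-site updates into a single homomorphism of a tree, which is what forces the branching, and hence the degree budget, to double --- is exactly Brightwell and Winkler's argument, and it is absent from the sketch. The ``rigid gadgets to pin boundary values'' you invoke also do not exist for a general target $H$.
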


A conjecture attributed to Lov\'asz in \cite{BW1} is that the last inequality can be improved to the following.

\begin{conjecture} \label{LC}  For all finite unlooped graphs $G$,
$$ m(G) \leq \chi(G). $$
\end{conjecture}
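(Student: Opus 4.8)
The plan is to prove the conjecture in its contrapositive form. Writing $k=\chi(H)$, the inequality $m(H)\le k$ is equivalent to the assertion that $H$ fails to be $(k+1)$-mobile, and by the definition of mobility (note that $(d+1)$-mobility implies $d$-mobility, since it quantifies over a larger family of test graphs, so mobility is a genuine threshold) this means I must exhibit a single test graph $T$ of maximum degree at most $k-1$ for which $\mathrm{Hom}(T,H)$ is nonempty but disconnected. Thus the whole problem reduces to a construction: from the hypothesis $\chi(H)=k$ I must manufacture a low-degree graph $T$ whose space of $H$-configurations splits into at least two classes that cannot be joined by single-vertex recolourings.

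First I would warm up on the extreme cases to locate the right invariant. For $k=2$ (bipartite $H$ with an edge) the single edge $T=K_2$ already works: $\mathrm{Hom}(K_2,H)$ decomposes according to which side of the bipartition the first vertex is sent to, and a single-vertex flip cannot cross the bipartition, so there are at least two components. The governing principle is that a proper $2$-colouring of $H$ supplies a $\mathbb{Z}/2$-valued invariant that is locally constant on $\mathrm{Hom}(K_2,H)$. My strategy for general $k$ is to find the correct higher analogue: a winding-number or degree-type invariant $\nu\colon \mathrm{Hom}(T,H)\to A$ into a finite set $A$ that (i) is constant along edges of $\mathrm{Hom}(T,H)$, hence on connected components, and (ii) provably takes at least two values because $H$ is not $(k-1)$-colourable. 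The natural source of such an invariant is the topological lower bound for chromatic number: the non-$(k-1)$-colourability of $H$ forces nontrivial topology ($\mathbb{Z}/2$-index or connectivity of the box or neighbourhood complex) that obstructs contracting certain maps, and I would aim to repackage this obstruction as disconnectedness of $\mathrm{Hom}(T,H)$ for a suitable $(k-1)$-valent $T$.

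Concretely, I would try to build $T$ out of a fixed proper $k$-colouring $c\colon V(H)\to[k]$ together with an odd-cycle or simplicial scaffold of controlled degree, so that traversing a distinguished cycle of $T$ and reading off the $c$-colours of the image assigns to each homomorphism a residue invariant under single-vertex moves. Keeping $\Delta(T)\le k-1$ while still forcing two distinct residues is the delicate balance: too sparse a $T$ makes $\mathrm{Hom}(T,H)$ empty or trivially connected, while too dense a $T$ violates the degree budget and only recovers the known bound $m(H)\le 2\chi(H)-2$.

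The step I expect to be the genuine obstacle is exactly (ii) together with the degree constraint: proving that the invariant is nonconstant for an \emph{arbitrary} graph $H$ of chromatic number $k$, rather than for structurally special $H$ (complete graphs, Kneser graphs, or colour-critical graphs, where by Dirac every vertex has degree at least $k-1$ and the topology is well understood). For $k$-critical $H$ one has enough rigidity that the invariant should be manifestly two-valued, so a reasonable intermediate target is to settle the conjecture first for $k$-critical $H$ and then transfer it along an embedding $H'\hookrightarrow H$ of a critical subgraph; but since neither warmth nor mobility is monotone, this transfer is itself nontrivial and may require re-establishing disconnectedness directly on $H$. An alternative route would be to sharpen the Brightwell--Winkler inequality $m(H)\le 2w(H)-2$ all the way to $m(H)\le w(H)$, which with $w(H)\le\chi(H)$ would give the conjecture at once; this is almost certainly too strong in general (one expects graphs with $\chi$ much larger than $w$ on which $m$ exceeds $w$), so I would use it only as a heuristic for pinpointing where the obstruction must live. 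It is precisely this gap between the well-controlled critical case and a fully general $H$ that keeps the universal statement a conjecture, and it is why the present paper instead establishes the inequality only for random, hence almost all, graphs.
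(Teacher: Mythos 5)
You have not proved the statement, and it is important to be clear about why: Conjecture \ref{LC} is an open conjecture attributed to Lov\'asz, the paper itself does not prove it in general, and your text is a research programme rather than an argument. What you do establish is correct but already known: the reduction of $m(H)\le k$ (with $k=\chi(H)$) to exhibiting a single $T$ with $\Delta(T)\le k-1$ and $\mathrm{Hom}(T,H)$ nonempty and disconnected is sound, since $(d+1)$-mobility quantifies over $\mathcal{H}_{d-1}\supseteq\mathcal{H}_{d-2}$ and hence implies $d$-mobility; and your $k=2$ warm-up is correct (fix a proper $2$-colouring $c$; for $\phi\in\mathrm{Hom}(K_2,H)$ the value $c(\phi(1))$ is forced to be preserved by any single-vertex move, and both values occur), recovering a fragment of the $\chi\le 3$ cases settled by Brightwell and Winkler. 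But the pivotal step --- defining the invariant $\nu$ for general $k$ and proving it is locally constant yet nonconstant on $\mathrm{Hom}(T,H)$ for an \emph{arbitrary} $H$ with $\chi(H)=k$, under the budget $\Delta(T)\le k-1$ --- is never carried out, and you concede as much in your closing paragraph. That step \emph{is} the conjecture; nothing new is established.

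The concrete obstruction is harsher than your sketch acknowledges. By Brooks' theorem, a connected $T$ with $\Delta(T)\le k-1$ satisfies $\chi(T)\le k-1$ unless $T=K_k$ or ($k=3$ and $T$ is an odd cycle); so the only ``chromatically rigid'' test graph available in $\mathcal{H}_{k-1}$ is essentially $K_k$, which helps only when $H$ contains a $k$-clique. For triangle-free $H$ of large chromatic number --- e.g.\ the Kneser graphs $K(3n-1,n)$ discussed in the paper --- any $T$ with $\mathrm{Hom}(T,H)\ne\emptyset$ is itself triangle-free, so no clique or simplicial scaffold can enter the construction at all, and your colour-word idea faces the problem that the winding number for odd-cycle targets exists because of the circular structure of the target; a non-canonical $k$-colouring of a general $H$ supplies no analogous structure, and a residue read along a cycle of $T$ is not preserved by single-site updates. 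Note finally that the paper's actual contribution to Conjecture \ref{LC} goes by a completely different and non-constructive route: it upper-bounds \emph{warmth} of random graphs via $s$-stable families (Theorem \ref{thm1}), converts this to a mobility bound via $m(H)\le 2w(H)-2$ (Theorem \ref{ineqs}), and compares with known lower bounds on $\chi$ of random graphs, thereby verifying the inequality a.a.s.\ rather than for all $H$. So the honest verdict: your reduction and base case are fine, the general construction is absent, and the absent piece is exactly the open problem.
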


Brightwell and Winkler have shown this for $\chi(H) \le 3$, and Lov\'asz may have an unpublished proof for $\chi(H) \le 4$ \cite{BW1},
but in general the conjecture is open.  One of the contributions of this article is to establish the conjecture for many graphs (in some sense, ``most graphs'') with large chromatic number. We now prove the following, that establishes Conjecture \ref{LC} a.a.s.\ for random graphs. 

\begin{corollary}\label{coro:LC}
If $p=O(n^{-\alpha})$, for some $\alpha>0$, then a.a.s. $$m(G(n,p))<\chi(G(n,p)).$$
\end{corollary}

\begin{proof}[Proof of Corollary \ref{coro:LC}]\leavevmode
\begin{enumerate}
    \item When $p=\omega\left(1/n\right)$, it is known that a.a.s. $\chi(G(n,p))\to\infty$ as $n\to\infty$ (e.g. see Theorem 11.29 in \cite{Bollobas1}). In contrast, by  Theorem \ref{sparse}, a.a.s. $w(G(n,p))\leq\lfloor1/\alpha+2\rfloor$, so by Theorem \ref{ineqs}, $m(G(n,p))= O(1)$. 
    \item When $p=O(1/n)$, Theorem $\ref{sparse}$ implies that a.a.s $w(G(n,p))\leq 3$, so Theorem \ref{ineqs} produces the bound $m(G(n,p))\leq 5$. If $\chi(G(n,p))\geq5$ we are done. Otherwise, $\chi(G(n,p))\leq 4$ and these cases have already been established in \cite{BW1}.\qedhere
    \end{enumerate}
\end{proof}

\section{Topological bounds on chromatic number, Kneser graphs, and connections} \label{sec:topology}

Another approach to lower bounding chromatic number of a graph was pioneered by Lov\'asz \cite{Lovasz} and further developed, for example, in \cite{BK1,BK2, Kozlov}. See Matousek's book \cite{Matousek} for an introduction and overview of equivariant topological methods in combinatorics.

The neighborhood complex of a graph was introduced by L\'ovasz in \cite{Lovasz} as a tool to prove Kneser's conjecture. We state some useful definitions and results from \cite{Lovasz}, but we skip the more technical details. For a more comprehensive exposition on simplicial complexes and topology in general, see \cite{Kozlov, Hatcher}.

\begin{definition}\label{def:neigh_compmlex}
Given a graph $G$, its neighborhood complex $\mathcal{N}(G)$ is the simplicial complex on the vertices of $G$ with simplices all $A\subset V(G)$ such that $A$ has a common neighbor, i.e. $A$ spans a face if and only if $\cap_{v\in A} N(v)\neq\emptyset$. 
\end{definition}

Lov\'asz proved that the connectivity of $\mathcal{N}(G)$ (in the sense of homotopy theory, see \cite{Hatcher}) bounds the chromatic number as follows:

\begin{theorem}[Lov\'asz \cite{Lovasz}]
Let $G$ be a finite simple graph. Then $$\textup{conn}\left[\mathcal{N}(G)\right]+3\leq \chi(G).$$
\end{theorem}

The topological approach shares some similarities with the statistical physics approach. In both settings, one probes the target graph $H$ with maps from a ``test graph'' $T$, often with some symmetry. Both settings give tight lower bounds on chromatic number for certain infinite families of graphs, including bipartite graphs and complete graphs. 
That is,
$$\chi(H) = w(H) =\textup{conn}\left[\mathcal{N}(G)\right]+3$$
for all of these examples. 



The neighborhood complex of a graph is more related to the warmth of a graph that it may appear at first sight. On one hand, the warmth depends on the structure of Hom($T^s$, $G$) for infinite branching trees $T^s$. On the other hand, Babson and Kozlov (\cite{BK2}) discussed a general cell complex structure for Hom($H$,$G$), and proved that the neighborhood complex $\mathcal{N}(G)$ is homotopy equivalent to Hom($T$, $G$) for any finite tree $T$. Moreover, further evidence of a possible relationship comes from the work of Kahle \cite{Kahle_neighborhood} computing the connectivity of the neighborhood complex of random graphs. In summary, he proved that a.a.s.%
\begin{align*}
    \frac{1}{\alpha}-3 &\leq \text{conn}\left[\mathcal{N}(G(n,n^{-\alpha})\right] \leq \frac{4}{\alpha}-3\ \text{ and}\\
    (1-\epsilon)\log_2{n} &\leq \text{conn}\left[\mathcal{N}(G(n,1/2)\right]\leq (4+\epsilon)\log_2{n},
\end{align*}
which are highly correlated to our results in Theorems \ref{sparse} and \ref{dense}. 

To further explore possible connections between the parameters, we consider the example of Kneser graphs. 

\begin{definition}
For $n \geq 2k$ the Kneser graph $KG_{n,k}$ has $\binom{n}{k}$ vertices, one for each $k$-subset of an $n$-set, with edges between disjoint subsets.
\end{definition}

As part of his proof of Kneser's conjecture \cite{Lovasz}, Lov\'asz computed the connectivity of their neighborhood complex. 

\begin{theorem}[\cite{Lovasz}]\label{thm:kneser_conn}
For $n\geq 2k$, $$\textup{conn}\left[\mathcal{N}(KG_{n,k})\right]=n-2k-1.$$ 
\end{theorem}

In the following theorem, we compute their warmth. 

\begin{theorem}\label{thm:kneser}
Let $k\geq 1$ and $n\geq 2k$, then 
$$w(KG_{n,k})=\left\lceil\frac{n}{k}\right\rceil$$
\end{theorem}

\begin{proof}
When $n=2k$, $KG_{2k,k}$ is bipartite, so $w(KG_{2k,k})=\chi(KG_{2k,k})=2$ (see \cite{BW1}). We thus assume $\left\lceil\frac{n}{k}\right\rceil=s+2$, for some $s\geq 1$.

Note that for any given vertex $A\subset\binom{[n]}{k}$, we  have  $N(A)=\binom{[n]\setminus A}{k}$. Since $sk<n-k\leq (s+1)k$, we can always choose a collection $\mathcal{C}$ of $(s+1)$ $k$-subsets of $[n]\setminus A$ whose union is $[n]\setminus A$, and so $\{A\}=\cap_{B\in\mathcal{C}} N(B)$. Thus the collection of singletons of vertices forms a $(s+1)$-stable family, and $w(KG_{n,k})\leq s+2$.  

If $s=1$, $KG_{n,k}$ is connected and not bipartite, so $w(KG_{n,k})=3$. From now on we focus on $s\geq 2$. We will show that $KG_{n,k}$ has property $\mathcal{P}^s_{2k}$, thus bounding $w(KG_{n,k})\geq s+2$. We start by establishing the following two claims.\\

    \textbf{Claim 1: } Let $0\leq r\leq k$, and $A_1,\dots, A_{s}$ be $k$-subsets of $[n]$ such that $|A_1\cap A_i|\geq r$ for $1\leq i\leq s$. Then $\displaystyle|\cup_{i=1}^{s}A_i|\leq sk-(s-1)r$.\\
    
    \textit{Proof of Claim 1:} Note $|A_i\setminus A_1|=|A_i|-|A_i\cap A_1|\leq k-r$. Thus%
    \begin{align*}
        \left| \bigcup_{i=1}^{s}A_i\right|&=|A_1|+\left|\bigcup_{i=2}^{s} (A_i\setminus A_1)\right| \leq k+\sum_{i=2}^{s}|A_i\setminus A_1|\\
        &\leq k+\sum_{i=2}^{s} (k-r)=sk-(s-1)r.
    \end{align*}\\[-3em]
    \phantom{x}\hfill$\triangle$\vspace{2em}

    Let $\T$ be an $s$-branching tree of depth 2. Label its vertices as: $$V(\T)=\{v_0,v_1,v_2,\dots,v_{s},v_{1}^1,\dots,v_1^s,v_2^1,\dots,v_2^s,\dots,v_s^1,\dots, v_s^s\},$$ where $v_0$ is the root, $\{v_1,\dots, v_s\}$ are $v_0$'s children and for each $1\leq i\leq s$ the children of $v_i$ are $\{v_i^1,\dots, v_i^s\}$ (Figure \ref{fig:tree_kneser}).\\
    
    \textbf{Claim 2: } Given an integer $0\leq r\leq (k-1)$, a fixed $(r+1)$-set $A_0\subset[n]$, and a function $f:L(\T)\to\binom{[n]}{k}$ such that $\displaystyle\left|\bigcap_{v_i^j\in L(\T)} f(v_i^j)\right|\geq r$. Then $f$ can be extended into a graph homomorphism $f:\T\to KG_{n,k}$ such that $A_0\subset f(v_0)$.\\
    
    \textit{Proof of Claim 2:} For each fixed $i$, the $k$-sets $f(v_i^1),\dots, f(v_i^s)$ satisfy the hypothesis of Claim 1, so $\left|\bigcup_{j=1}^{s} f(v_i^j)\right|\leq sk-(s-1)r$. 
    For each $1\leq i\leq s$, let $E_i=[n]\setminus\bigcup_{j=1}^{s} f(v_i^j)$, so if we assign any $k$-subset of $E_i$ as $f(v_i)$, it will be disjoint to $f(v_i^j)$ for all $v_i$'s children. Note $$|E_i|\geq n-sk+(s-1)r=k+\ell+(s-1) r\geq k+1+r,$$
    where $\ell:=n-(s+1)k.$ So $|E_i\setminus A_0|\geq |E_i|-|A_0|\geq k$. Assign as $f(v_i)$ any $k$-subset of $E_i\setminus A_0$, for each $1\leq i\leq s$. Now we define $f(v_0)$ disjoint to the $f(v_i)$'s. Note that $$\left|A_0\cup \bigcup_{i=1}^s f(v_i)\right|\leq |A_0|+\sum_{i=1}^s|f(s_i)|=(r+1)+sk$$
    
    So $[n]\setminus \left(A_0\cup \bigcup_{i=1}^s f(v_i)\right)$ has at least $n-((r+1)+sk)\geq k-(r+1)$ elements. Let $B$ be any $(k-r-1)$ such elements and assign $f(v_0):=A_0\uplus B$. Thus $f(v_0)$ is a $k$-set disjoint to all the sets $f(v_i)$, and so $f$ is a graph homomorphism as desired.$\triangle$\\

\begin{figure}
\begin{center}
\begin{subfigure}[b]{0.45\textwidth}
\centering
    \begin{tikzpicture}[x=0.8cm,y=0.8cm,line width=1pt]
\tikzstyle{every node}=[text=black,anchor=west]
\Large
\draw (0,2)--(-1.333,1);
\draw (0,2)--(1.333,1);
\draw[fill=blue] (0,2) circle (4pt) node[] {$v_0$};

\draw (-1.333,1)--(-0.667,0);
\draw (-1.333,1)--(-2,0);
\draw[fill=blue] (-1.333,1)  circle (4pt) node[anchor=east] {$v_1$};
\draw[fill=blue] (-0.667,0) circle (3pt) node[anchor=east] {$v_1^2$};
\draw[fill=blue] (-2,0) circle (3pt) node[anchor=east] {$v_1^1$};

\draw (1.333,1)--(0.667,0);
\draw (1.333,1)--(2,0);
\draw[fill=blue] (1.333,1)  circle (4pt) node[] {$v_2$};
\draw[fill=blue] (0.667,0) circle (3pt) node[] {$v_2^1$};
\draw[fill=blue] (2,0) circle (3pt) node[] {$v_2^2$};
\end{tikzpicture}
\caption{$\T$ in Theorem \ref{thm:kneser}.}\label{fig:tree_kneser}
\end{subfigure}
\begin{subfigure}[b]{0.45\textwidth}
\centering
    \begin{tikzpicture}[x=.8cm,y=.8cm,line width=1pt]
\tikzstyle{every node}=[text=black,anchor=west]
\Large
\draw (0,3)--(0,2);
\draw[fill=blue] (0,3) circle (4pt) node[] {$u$};

\draw (0,2)--(-1.333,1);
\draw (0,2)--(1.333,1);
\draw[fill=blue] (0,2) circle (4pt) node[] {$v_0$};

\draw (-1.333,1)--(-0.667,0);
\draw (-1.333,1)--(-2,0);
\draw[fill=blue] (-1.333,1)  circle (4pt) node[anchor=east] {$v_1$};
\draw[fill=blue] (-0.667,0) circle (3pt) node[anchor=east] {$v_1^2$};
\draw[fill=blue] (-2,0) circle (3pt) node[anchor=east] {$v_1^1$};

\draw (1.333,1)--(0.667,0);
\draw (1.333,1)--(2,0);
\draw[fill=blue] (1.333,1)  circle (4pt) node[] {$v_2$};
\draw[fill=blue] (0.667,0) circle (3pt) node[] {$v_2^1$};
\draw[fill=blue] (2,0) circle (3pt) node[] {$v_2^2$};
\end{tikzpicture}
\caption{$\T$ in Lemma \ref{lemma:connectivity}.}
\end{subfigure}
\caption{Labelings of tree $\T$ in the proofs of Theorem \ref{thm:kneser} and Lemma \ref{lemma:connectivity}, when $s=2$.}
\label{fig:label_trees}
\end{center}
\end{figure}

Let $T=T^s_{2k}$, labeled as in Figure \ref{fig:three_branching_map}. Fix $f:D(T)\to V(KG_{n,k})=\binom{[n]}{k}$, and denote $f(1)=\{a_1,\dots, a_k\}$. We will prove that $f$ is extendable to a graph homomorphism $\psi:T\to KG_{n,k}$.

 For any $k\geq 0$, denote by ${T}|_{k}$ its $k$-layer, that is, the vertices at distance $k$ to the root. And similarly, if $C=\{k_1,\dots, k_m\}\subset\mathbb{N}$, denote by $T|_C$ the subgraph induced by the vertices on layers $k_1,\dots, k_m$.  We will construct $\psi$ by induction over $r$, showing that if it is defined on $T_{(2k-2r+2)}$ it can be extended to $T|_{\{2k-2r,2k-2r+1,2k-2r+2\}}$ such that
$$\{a_1,\dots, a_r\}\subset\bigcap_{v\text{ in layer }(2k-2r)} \psi(v).$$

\begin{itemize}
    \item For $r=0$, define $\psi(v)=f(v)$ for all $v$ in layer $2k$, it trivially satisfies the claim.
    \item Suppose the claim is true for all values at most $r$, thus we need to extend $\psi$ to layers $C_r=\{2k-2r-2,2k-2r+1,2k-2r\}$. Note ${T}|_{C_r}$ is a collection of disjoint trees of depth 2, and let $\T$ be one of such trees. By the induction hypothesis, $\{a_1,\dots, a_r\}\subset \bigcap_{v\in\text{Leaves}(\T)} \psi(v)$. So we can apply Claim 2 to extend $\psi$ on $\T$, such that $\{a_1,\dots, a_r, a_{r+1}\}\subset\psi(\text{root of }\T)$. Doing this extension to all the trees in $T|_{C_r}$, we successfully extend $\psi$ to layers $C_r$, and since the roots of all such trees are in layer $(2k-2r-2)$, we have $$\{a_1,\dots, a_r,a_{r+1}\}\subset\bigcap_{v\text{ in layer }(2k-2r-2)} \psi(v).$$ 
\end{itemize}

Hence, when $r=k$, we get $\psi:T\to KG_{n,k}$ with $\{a_1,\dots, a_k\}\subset \psi(1)$, so $\psi(1)=f(1)$ as required. 
\end{proof}

We observe that $w(KG_{n,k})\leq\text{conn}\left[\mathcal{N}(KG_{n,k})\right]+3$, but the gap between the two quantities can be arbitrarily large, hence the strongest inequality that we could hope for is:

\begin{conjecture}\label{conj:conn}
$$ w(H) \le \textup{conn}[\mathcal{N}(H)] + 3.$$
\end{conjecture}


Inspired by a previous version of this paper where Conjecture \ref{conj:conn} first appeared, Dochtermann and Freij \cite{DFH18} proved it holds for some specific graphs with $\chi(G)>3$, in particular they proved the following:

\begin{theorem}[\cite{DFH18}]
If the first homology group $H_1(\mathcal{N}(G))$ contains an infinite cyclic subgroup, then $w(G)\leq 3$. And so,  if $G$ is also not bipartite, $$w(G)=3=\textup{conn}\left[\mathcal{N}(G)\right]+3\leq \chi(G). $$
\end{theorem}

This shows the conjecture may hold even if the connectivity of the neighborhood complex is far away from the chromatic number. Below, we show that the conjecture holds a.a.s. for $G(n,n^{-\alpha})$ for some specific ranges of $\alpha$. In particular, this provides examples where the conjecture holds even if warmth and connectivity of the neighborhood complex are close to each other and far away from the chromatic number. 

\begin{lemma}\label{lemma:connectivity}
If $G$ has property $\mathcal{P}^{s}_3$ then $\mathcal{N}(G)$ is $(s-1)$-connected. 
\end{lemma}

\begin{proof}[Proof]
Let $K=\mathcal{N}(G)$. Since $G$ has property $\mathcal{P}^s_3$, $K$ has a complete $(s-1)-$skeleton. Let $S$ be any set of $(s+1)$-vertices of $K$, and let $\partial S$ be the collection of $s$-subsets of $S$, thus $\partial S\subset K$. We will prove there is another vertex $w$ in $K$, such that the cone $w*\partial S\subset K$, which implies $\partial S$ is the boundary of a geometric $s$-ball in $K$. Since $S$ can be any $s$-face,  $K$ is $(s-1)$-connected.  

Let $\T$ be one branch of the $s$-branching tree of length 3, $T^s_3$. Label its vertices as follows: let $u$ be its root, $v_0$ be the only child of the root, $v_1, \dots, v_s$ the children of $v_0$ and $v_i^1,\dots, v_i^s$ the children of $v_i$, for each $1\leq i\leq s$ (Figure \ref{fig:label_trees}). 

Let $S=\{a_0, a_1,\dots, a_s\}$. Since $G$ is $s$-neighborly, there is a common neighbor of $S\setminus\{a_0\}$, call it $z$. Define $f:D(\T)\to G$ as follows
\begin{align*}
    f(u) &= z\\
    f(v_i^j) & = \begin{cases}
    a_j &\text{ if } i\neq j\\
    a_0 &\text{ if } i=j
    \end{cases}
\end{align*}

Since $G$ has property $\mathcal{P}^s_3$, $f$ can be extended into a graph homomorphism, in particular there is a vertex $w=f(v_0)$. 

Then $\{z,w\}\in E(G)$ and, by definition, $z$ is a common neighbor of $a_1,\dots,a_s$, thus $\{w,a_1,\dots,a_x\}$ is a face of $K$. 
At the same time, for each $1\leq i\leq s$, $$f(v_i)\in N(w)\cap\bigcap_{\substack{0\leq j\leq s\\ i\neq j}} N(a_j),$$%
so $\{w,a_0,\dots, a_s\}\setminus\{a_i\}$ is also a face of $K$, for each $1\leq i\leq s$. Therefore, $w*\partial S\subset K$ as desired.
\end{proof}

\begin{theorem}\label{thm:warmth_connectivity}
Let $p=n^{-\alpha}$ with $\alpha$ such that  $\frac{1}{s+1}<\alpha<\frac{1+s}{1+s+s^2}$, for some integer $s\geq 3$. Then a.a.s. $w(G(n,p))\leq \textup{conn}\left[\mathcal{N}(G(n,p))\right]+3$.
\end{theorem}

\begin{proof}[Proof of Theorem \ref{thm:warmth_connectivity}]
By Theorem \ref{sparse}, $w(G(n,p))\leq\lfloor1/\alpha +2\rfloor=s+2$.
Note $$\frac{1}{1+s+s^2}<1-s\alpha,$$ so choosing $\beta>\alpha$ if necessary, the argument in the proof of Corollary \ref{lemma:property_s_ell_dense} shows that a.a.s. $G(n,n^{-\beta})$ has property $\mathcal{P}^s_3$, and so a.a.s. $G(n,p)$ has property $\mathcal{P}^s_3$. Then by Lemma \ref{lemma:connectivity} $\textup{conn}\left[\mathcal{N}(G(n,p))\right]\geq s-1$, and the result follows.
\end{proof}

Even the following stronger version of might hold. This inequality would imply Conjecture \ref{LC}, since we already know that 
$$\textup{conn}[\mathcal{N}(H)] + 3 \le \chi(H).$$

\begin{conjecture}\label{conj:mob}
$$ m(H) \le \textup{conn}[\mathcal{N}(H)] + 3.$$
\end{conjecture}


We note Conjecture \ref{conj:mob} holds for complete graphs, since $m(K_n)=n$ (see \cite{BW1}) and $\text{conn}\left[\mathcal{N}(K_n)\right]=n-3$ (Theorem \ref{thm:kneser_conn} with $k=1$). Moreover, Theorems \ref{thm:kneser}, \ref{thm:kneser_conn} and \ref{ineqs} show it holds for Kneser graphs $KG_{n,k}$ with $k\geq 3$ and $(n,k)\neq(10,3)$. Brightwell and Winkler also proved in \cite{BW1}, that if $\chi(G)\leq3$, $w(G)=m(G)$, so it also holds for graphs with low chromatic number, in particular bipartite graphs and cycles. 

\section{Future directions}


We briefly suggest a few problems for future study.\\

\begin{itemize}
\item We conjectured in Section \ref{sec:topology} that
$$ w(H) \le \textup{conn}[\mathcal{N}(H)] + 3.$$
It would be interesting, though, to find any connection between the graph parameters coming from statistical physics and those coming from topology.\\

\item We conjecture that the property $w(H) \le k$ has a sharp threshold in the following sense, even though warmth is not a monotone graph property.

\begin{conjecture}
For each integer $d\geq 2$, there exists a constant $C_d$ such that for fixed $\epsilon > 0$ we have:
\begin{itemize}
    \item if $p \le \left(C_d - \epsilon\right) n^{-1/d}$, then a.a.s. $w\left(G(n,p)\right)\leq d+1$, and
    \item if $p \ge \left( C_d + \epsilon \right) n^{-1/d}$, then a.a.s. $w\left(G(n,p)\right)\geq d+2$. \\
\end{itemize}
\end{conjecture}

\item Brightwell and Winkler gave several examples of families of graphs where warmth and chromatic number are equal, namely bipartite graphs, complete graphs, and ``collapsible'' graphs. It might be interesting to find other families where warmth is close to chromatic number. Our work here indicates that Erd\H{o}s--R\'enyi random graphs will not work. ``Random Borsuk graphs'' taken by sampling $n$ uniform random points from a sphere $S^d$, and connecting nearly antipodal points, were studied in \cite{KM20}. For random Borsuk graphs, it seems likely to us that the connectivity of neighborhood complex is close to the chromatic number, but warmth might be small even when $d$ is large.

\end{itemize}

%

\section*{Acknowledgements}

We thank Persi Diaconis, Anton Dochtermann, and Peter Winkler for helpful and encouraging  conversations. Kahle and Martinez-Figueroa gratefully acknowledge support from NSF-DMS \#2005630.

\bibliography{wrefs}
\end{document}